\documentclass[a4paper,12pt]{amsart}

\usepackage{enumitem}

\makeatletter
\@namedef{subjclassname@1991}{2020  Mathematics Subject Classification}
\makeatother

\usepackage[utf8]{inputenc}
\usepackage{amstext}
\usepackage{amsfonts}
\usepackage{amsthm}
\usepackage{textcomp}
\usepackage{amssymb}
\usepackage{amscd}
\usepackage{amsmath}
\usepackage{xcolor}

\makeatletter
\newtheorem*{rep@theorem}{\rep@title}
\newcommand{\newreptheorem}[2]{%
\newenvironment{rep#1}[1]{%
 \def\rep@title{#2 \ref{##1}}%
 \begin{rep@theorem}}%
 {\end{rep@theorem}}}
\makeatother

\newtheorem{Theorem}{Theorem}[section]
\newreptheorem{Theorem}{Theorem}
\newreptheorem{Corollary}{Corollary}
\newtheorem{Lemma}[Theorem]{Lemma}
\newtheorem{Proposition}[Theorem]{Proposition}
\newtheorem{Corollary}[Theorem]{Corollary}

\newtheorem{Definition}[Theorem]{Definition}

\newtheorem{Remark}[Theorem]{Remark}

\newcommand{\Z}{\mathbb{Z}}

\newcommand{\reg}{\textnormal{reg}}

\newcommand{\spn}{\operatorname{span}}

\newcommand{\ol}{\overline}



\begin{document}

\title{A note on the regular ideals of Leavitt path algebras}

\author[D. Gon\c{c}alves]{Daniel Gon\c{c}alves}
\address{Daniel Gon\c{c}alves - Departamento de Matem\'{a}tica - UFSC - Florian\'{o}polis - SC, Brazil}
\email{daemig@gmail.com}
\thanks{D. Gon\c{c}alves was partially supported by Conselho Nacional de Desenvolvimento Cient\'ifico e Tecnol\'ogico (CNPq) and Capes-PrInt grant number 88881.310538/2018-01 - Brazil.}

\author[D. Royer]{Danilo Royer}
\address{Danilo Royer - Departamento de Matem\'{a}tica - UFSC - Florian\'{o}polis - SC, Brazil}
\email{daniloroyer@gmail.com}

\subjclass{Primary 16S88.}


\keywords{Leavitt path algebras, graded ideals, regular ideals, quotient graphs}

\begin{abstract}
 We show that, for an arbitrary graph, a regular ideal of the associated Leavitt path algebra is also graded. As a consequence, for a row-finite graph, we obtain that the quotient of the associated Leavitt path by a regular ideal is again a Leavitt path algebra and that Condition~(L) is preserved by quotients by regular ideals. Furthermore, we describe the vertex set of a regular ideal and make a comparison between the theory of regular ideals in Leavitt path algebras and in graph C*-algebras.

\end{abstract}

\maketitle

\section{Introduction}

Leavitt path algebras arose as algebraization of graph C*-algebras and are the subject of intense research. Although tempting, we refrain from presenting an exposition of the beginning and development of the field and refer the reader to the excellent works \cite{Gene, book}.

Our interest in this note is to explore regular ideals (in the sense of Hamana, \cite{Hamana}) of Leavitt path algebras. We will see that the algebraic theory of regular ideals has resemblances with the theory of regular ideals in graph C*-algebras, as we show algebraic analogues of the results proved for graph C*-algebras by Brown, Fuller, Pitts, and Reznikoff in \cite{galera}. We will also describe some differences between the analytical and algebraic settings. In particular, we will prove that every regular ideal in a Leavitt path algebras is graded, while the equivalent statement is not true in general for graph C*-algebras, see Remark~\ref{ondesol}. 

In the analytical setting, regular ideals of graph C*-algebras are connected with gauge-invariant ideals. The algebraic analogue of a gauge-invariant ideal is a $\Z$-graded ideal (or simply called graded ideal). 
Graded ideals and structures play a key role in the theory of Leavitt path algebras. We mention a few examples, as \cite{ABHS}, where the graded structure of Steinberg algebras is used to deduce that diagonal preserving ring isomorphism of Leavitt path algebras induces C*-isomorphism of C*-algebras for graphs that satisfy Condition~(L),
\cite{AHHS}, where it is deduced that the lattice of order-ideals in the $K_0$-group of the Leavitt path algebra is isomorphic to the lattice of graded ideals of the algebra, \cite{HR}, where graded irreducible representations of Leavitt path algebras are studied, \cite{HRS}, where it is shown that for a finite graph, graded regular graded self-injective Leavitt path algebras are of graded type I, \cite{CHR}, where a complete graphical characterization of strongly graded Leavitt path algebras is given, \cite{AMMS}, where the graded uniqueness theorem is proved via socle theory, and \cite{Pardo}, where it is shown that there is a natural isomorphism between the lattice of graded ideals of a Leavitt path algebras and the monoid of isomorphism classes of finitely generated projective modules. 

Next, we provide a more detailed description of what we will prove in this note. After recalling the relevant preliminary concepts on Leavitt path algebra and graded ideals in Section 2, we start Section~3 with the definition of a regular ideal, see Definition~\ref{def: reg}. We then proceed to prove our main theorem, Theorem~\ref{avai ganhou do vasco}, from where we obtain, as an immediate Corollary, that all regular ideals in a Leavitt path algebra are graded, see Corollary~\ref{regular are graded}. We describe the vertex set of a regular ideal in Proposition~\ref{tomaraqdc} and describe relations between regular ideals and cycles in a Leavitt path algebra in Proposition~{bijecao}. We then proceed to study the Leavitt path algebra associated with a quotient graph: Given a Leavitt path algebra, it is known that its quotient by a graded ideal is again a Leavitt path algebra (of a different graph, see \cite[Theorem~2.4.15]{book}). This is the case, for example, of any quotient of an exchange Leavitt path algebra (that is, a Leavitt path algebra associated to a graph that satisfies Condition~(K)), since for these algebras all of its ideals are graded (see \cite[Proposition~2.9.9]{book}). In Proposition~\ref{lagosta} we show that, for row-finite graphs, Condition~(L) is preserved by quotients by regular ideals, and show that, for graphs that are row-finite and satisfy Conditon~(L), the quotient of the associated algebra by a regular ideal is again a Leavitt path algebra. In Remark~\ref{ondesol} we point to some differences between the analytical and algebraic theory and, finally, we finish the paper with some remarks about maximal regular ideals.

\section{Preliminaries}

In this section, we recall the key concepts that we will need in this note. We follow the notation in \cite{book, Tomforde}.

\subsection{Leavitt path algebras}

A graph $E := (E^0, E^1, r, s)$ consists of a countable set of vertices $E^0$, a countable set of edges $E^1$, and maps $r: E^1 \to E^0$ and $s:E^1 \to E^0$ identifying the range and source of each edge. A graph $E$ is \emph{row-finite} if $s^{-1}(v)$ is finite for each vertex $v$.  If $E$ is a graph, a \emph{nontrivial path} is a sequence $\alpha := e_1 e_2 \ldots e_n$ of edges with $r(e_i) = s(e_{i+1})$ for $1 \leq i \leq n-1$.  We say the path $\alpha$ has \emph{length} $| \alpha| :=n$, and we let $E^n$ denote the set of paths of length $n$.  We consider the vertices in $E^0$ to be trivial paths of length zero.  We also let $E^* := \bigcup_{n=0}^\infty E^n$ denote the paths of finite length. For a path $\alpha$, we let
$\alpha^0:=\{r(e_i)\}\cup\{s(e_i)\}$ be the set of vertices
in the path $\alpha$. A path $\alpha=e_1\ldots e_n$ is a \emph{cycle} if $r(e_n)=s(e_1)$ and $s(e_i)\neq s(e_j)$ for each $i\neq j$. An \emph{exit} for $\alpha$ is an edge $e$ such that $s(e)=s(e_i)$ for some $i$, but $e\neq e_i$. A graph is said to satisfy \emph{Condition~(L)} if every cycle has an exit. The set $E_{reg}^0$ of \emph{regular vertices} is the set of all the vertices $v$ such that $s^{-1}(v)$ is nonempty and finite. If $|s^{-1}(v)|=\infty$ then $v$ is called an \emph{infinite emitter}. Let $H \subseteq E^0$.
The set $H$ is \emph{hereditary} if whenever $\alpha\in E^*$ satisfies
$s(\alpha)\in H$, then $r(\alpha)\in H$.
The set $H$ is \emph{saturated} if  for all $v\in E_{reg}^0$, $r(s^{-1}(v))  \subseteq H$ implies $v\in H$. 
A vertex $w$ is called a \emph{breaking vertex} of $H$ if $w\in E^0\setminus H$ is an infinite emitter and $1\leq |s^{-1}(w)\cap r^{-1}(E^0\setminus H)|<\infty$. We denote by $B_H$ the set of all the breaking vertices of $H$. For each $v\in B_H$ we define $v^H=v-\sum\limits_{s(e)=v; \,\,r(e)\notin H }ee^*$.

The Leavitt path algebra associated with a graph is defined as follows:

\begin{Definition} \label{roncando}
Let $E$ be a directed graph, and let $K$ be a field.  The \emph{Leavitt path algebra of $E$ with coefficients in $K$}, denoted $L_K(E)$,  is the universal $K$-algebra generated by a set $\{v : v \in E^0 \}$ of pairwise orthogonal idempotents, together with a set $\{e, e^* : e \in E^1\}$ of elements satisfying
\begin{enumerate}
\item $s(e)e = er(e) =e$ for all $e \in E^1$,
\item $r(e)e^* = e^* s(e) = e^*$ for all $e \in E^1$,
\item $e^*f = \delta_{e,f} \, r(e)$ for all $e, f \in E^1$, and
\item $v = \displaystyle \sum_{\{e \in E^1 : s(e) = v \}} ee^*$ whenever $v \in E^0_\reg$.
\end{enumerate}
\end{Definition}
\begin{Remark}
The Leavitt path algebra associated with a graph can also be constructed via partial skew group ring theory or via groupoid (Steinberg) algebras theory, see \cite{Canto, CMMS, dd2, dy, Ri}.
\end{Remark}

\begin{Definition}
Let $E$ be a graph
and $I$ be an ideal in the Leavitt path algebra $L_K(E)$.
Define
\[ H(I) := \{ v \in E^0 \colon v \in I \}. \]
\end{Definition}

If $H \subseteq E^0$ is hereditary, then we denote by $I(H)$ the ideal in $L_K(E)$ generated by $H$, that is, 
\begin{equation}\label{jacare}
 I(H) :=\spn\{\gamma \lambda^*: \gamma, \lambda\in E^* \text{ and } r(\gamma)=r(\lambda)\in H\},
\end{equation}
see \cite[Lemma~2.4.1]{book}.

\begin{Remark}\label{papaterra} Notice that if $J$ is an ideal of $L_K(E)$ then $H(J)$ is hereditary and saturated, see \cite[Lemma~2.4.3]{book} and \cite[Proposition[4.7]{dd2}. 
\end{Remark}
 
In our work, we will use the following characterization of the ideals in $L_K(E)$.

\begin{Proposition}\cite[Theorem 4]{ranga1}\label{propranga} Let $E$ be an arbitrary graph and $I$ a nonzero ideal of $L_K(E)$. Let $H=H(I)$ and define $S=\{v\in B_H: v^H\in I\}$. Then $I$ is generated by $H\cup \{v^H:v\in S\}\cup Y$, where Y is a set of mutually orthogonal elements of the form

$u+\sum\limits_{i=1}^nk_ig^{r_i}$ in which the following statements hold:
\begin{enumerate}
    \item $g$ is a cycle with no exits in $E^0\setminus H$ based at a vertex $u \in E^0\setminus H$;
    \item $k_i\in K$ with at least one $k_i\neq 0$, and $r_i$ are positive integers. 
\end{enumerate}
If $I$ is nongraded, then $Y$ is nonempty.
\end{Proposition}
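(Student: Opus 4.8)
The plan is to reduce, via a quotient-graph argument, to the case in which $I$ contains no vertices, and then to analyse ideals with empty vertex set by means of the Reduction Theorem.

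First, the ideal $G:=I(H,S)$ is graded and, by the very definitions of $H=H(I)$ and $S$, is contained in $I$; it is generated by $H\cup\{v^H:v\in S\}$. By \cite[Theorem~2.4.15]{book} there is a graph $\overline E$ with $L_K(E)/G\cong L_K(\overline E)$, and under this isomorphism the image $\overline I$ of $I$ is an ideal with $H(\overline I)=\emptyset$ (because $H(I)=H$), which is nonzero precisely when $I$ is nongraded, and such that the cycles of $\overline E$ without exits based at a vertex coming from $E^0\setminus H$ correspond exactly to the cycles of $E$ lying in $E^0\setminus H$ with no exit in $E^0\setminus H$, an element $u+\sum k_ig^{r_i}$ corresponding to the element given by the same expression. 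Since moreover $B_\emptyset=\emptyset$, it suffices to prove the following: \emph{a nonzero ideal $I$ of $L_K(E)$ with $H(I)=\emptyset$ is generated by a mutually orthogonal set $Y$ of elements of the stated form, and $Y\neq\emptyset$.}

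So assume $H(I)=\emptyset$. For a cycle $c=e_1\cdots e_\ell$ without exits based at $u$, each $s(e_i)$ is a regular vertex emitting only $e_i$, whence $e_ie_i^*=s(e_i)$ and, telescoping, $cc^*=c^*c=u$; thus $uL_K(E)u=\spn\{c^{\,n}:n\in\Z\}\cong K[x,x^{-1}]$. Fix $0\neq a\in I$ and apply the Reduction Theorem (see \cite{book}): there are $\mu,\nu\in E^*$ with $0\neq\mu^*a\nu$ equal to $kv$ for some $k\in K\setminus\{0\}$ and $v\in E^0$, or to $q(c)$ for some nonzero Laurent polynomial $q$ in a cycle $c$ without exits. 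The first case would give $v\in I$, contradicting $H(I)=\emptyset$; hence $q(c)\in I\cap uL_K(E)u$, and $q$ cannot be a monomial, since a monomial $kc^j$ is invertible in $uL_K(E)u$ (inverse $k^{-1}c^{-j}$), which would force $u\in I$. Writing $q(c)=\sum_{j=m}^n k_jc^j$ with $k_m,k_n\neq 0$ and $m<n$, the element $k_m^{-1}c^{-m}q(c)=u+\sum_{i=1}^{n-m}(k_{m+i}/k_m)c^i$ belongs to $I$ and has the required form; in particular $Y\neq\emptyset$ once $I\neq 0$. For each cycle $c$ without exits, chosen one per cyclic-rotation class, with $I\cap uL_K(E)u\neq 0$, this intersection is a nonzero ideal of the principal ideal domain $K[x,x^{-1}]$ containing no unit (else $u\in I$), hence of the form $(p_c)$ with $p_c$ a non-unit; multiplying $p_c$ by a suitable unit $x^t$ of $K[x,x^{-1}]$ we may assume it has nonzero constant term, and rescaling we may take that constant term to be $u$, so that $p_c=u+\sum k_ig^{r_i}$ (with $g=c$) is of the required form. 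Finally, distinct cycles without exits are vertex disjoint --- a shared vertex emits a unique edge in each, so the two cycles coincide up to rotation --- whence $u_cL_K(E)u_{c'}=0$ and $p_cp_{c'}=0$ for $c\neq c'$. Put $Y:=\{p_c\}$.

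It remains to prove that $I=\langle Y\rangle$. The inclusion $\langle Y\rangle\subseteq I$ is clear, and the orthogonality of the $p_c$ gives $\langle Y\rangle\cap u_cL_K(E)u_c=(p_c)=I\cap u_cL_K(E)u_c$ for each such $c$. Suppose $I\neq\langle Y\rangle$. Then $\overline I:=I/\langle Y\rangle$ is a nonzero ideal of $\overline L:=L_K(E)/\langle Y\rangle$ which, by the displayed equality, meets every ``cycle corner'' $u_cL_K(E)u_c/(p_c)$ of $\overline L$ trivially, and which contains no vertex of $\overline L$ (a vertex of $\overline L$ lying in $\overline I$ would pull back to a vertex of $E$ lying in $I$). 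A Reduction-Theorem argument applied inside $\overline L$ to a nonzero element of $\overline I$ would then produce a nonzero element of $\overline I$ that is a scalar multiple of a vertex or lies in a cycle corner --- a contradiction either way. Hence $I=\langle Y\rangle$, and unwinding the first step yields the proposition; the last assertion holds because $\overline I\neq 0$ whenever $I$ is nongraded. The main obstacle is exactly this final step: the Reduction Theorem is ordinarily proved using the uniqueness of the normal form $\sum k_i\gamma_i\lambda_i^*$, which fails in the quotient $\overline L$. One overcomes this either by rerunning the reduction procedure in $\overline L$ while carrying along the imposed relations $p_c=0$ (these affect only the cycle corners, where $\overline I$ is already trivial), or, more cleanly, by choosing $Y$ from the start to be maximal among the mutually orthogonal families of elements of $I$ of the prescribed form, so that the element produced in the last step would enlarge $Y$, contradicting maximality.
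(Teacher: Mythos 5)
This Proposition is not proved in the paper at all: it is quoted verbatim from \cite[Theorem 4]{ranga1} (see also \cite[Theorem~2.8.10]{book}), so the only comparison available is with the standard proof in the literature, and your overall strategy --- quotient by the graded ideal $I(H,S)$ to reduce to an ideal with empty vertex set, then analyse that ideal through the corners $uL_K(E)u\cong K[x,x^{-1}]$ attached to cycles without exits via the Reduction Theorem --- is indeed the standard one. The first part of your argument (the reduction to $H(I)=\emptyset$, the identification of the corners with $K[x,x^{-1}]$, the fact that $I\cap uL_K(E)u$ is a proper nonzero ideal of a PID and hence generated by a non-unit that can be normalized to $u+\sum_i k_ig^{r_i}$, and the vertex-disjointness giving mutual orthogonality) is correct.

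The genuine gap is in the last step, and you have in fact located it yourself: knowing that $I\cap u_cL_K(E)u_c=\langle Y\rangle\cap u_cL_K(E)u_c$ for every cycle $c$ without exits does not by itself imply $I=\langle Y\rangle$, and neither of your two proposed patches closes it. The maximality trick fails because applying the Reduction Theorem to an element $a\in I\setminus\langle Y\rangle$ only produces $\mu^*a\nu=q(c)\in(p_c)$, i.e.\ an element already in $\langle Y\rangle$, so nothing enlarges $Y$; and ``rerunning the reduction in $L_K(E)/\langle Y\rangle$'' is not available, since that quotient is not a Leavitt path algebra and the Reduction Theorem's proof depends on the normal form in $L_K(E)$. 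The missing ingredient is the structural fact about ideals with empty vertex set: such an ideal $I$ is contained in the ideal $M$ generated by $P_c(E)$, and $M$ decomposes as a direct sum $\bigoplus_c M_{\Lambda_c}(K[x,x^{-1}])$ of (possibly infinite) matrix rings over the corner algebras, one summand per cycle without exits (see \cite[Section~2.7--2.8]{book}). Granting this, $I=\bigoplus_c\bigl(I\cap M_{\Lambda_c}(K[x,x^{-1}])\bigr)=\bigoplus_c M_{\Lambda_c}\bigl((p_c)\bigr)$, each summand is generated as an ideal by the single corner element $p_c(c)$, and $I=\langle Y\rangle$ follows. Without importing (or proving) that decomposition, your argument establishes only $\langle Y\rangle\subseteq I$ together with equality of the corner intersections, which is strictly weaker than the Proposition.
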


\subsection{$\Z$-graded ideals of $L_K(E)$}

\begin{Definition}
Let $R$ be a ring. We say that $R$ is $\Z$-graded if there is a collection of additive subgroups $\{ R_n \}_{n \in \Z}$ of $R$ with the following two properties.
\begin{enumerate}
\item $R = \bigoplus_{n \in \Z} R_n$.
\item $R_m R_n \subseteq R_{m+n}$ for all $m,n \in \Z$.
\end{enumerate}
The subgroup $R_n$ is called the \emph{homogeneous component of $R$ of degree $n$}, and the elements of $R_n$ are said to be \emph{homogeneous of degree $n$}.
\end{Definition}

If $E$ is a graph, then we may define a $\Z$-grading on the associated Leavitt path algebra $L_K(E)$ by setting, for each $n\in \Z$, \[L_K(E)_n := \text{span}_K \left\{ \alpha \beta^* : \alpha, \beta \in E^* \text{ and } |\alpha | - | \beta| = n  \right\}.\]

We recall the definition of a graded ideal.

\begin{Definition}
If $R$ is a $\Z$-graded ring, then a subspace (ideal) $I$ of $R$ is a \emph{$\Z$-graded subspace (ideal)} (or just \emph{graded}) if $I = \bigoplus_{n \in \Z} (I \cap R_n)$, or equivalently, if $y=\sum y_n \in I$ with each $y_n\in R_n$, then $y_n \in I$ for all $n$.  
\end{Definition}

The following result gives a precise description of the graded ideals of a Leavitt path algebra in terms of subsets of $E^0$.

\begin{Theorem} {\cite[Theorem 2.5.9]{book}}\label{peixevoador}
Let $E$ be a row-finite graph. Then the map $J\mapsto H(J)$ is a lattice isomorphism between the graded ideals of $L_K(E)$ and the hereditary saturated subsets of $E^0$, with inverse $H\mapsto I(H)$.

\end{Theorem}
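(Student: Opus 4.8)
The plan is to check that the two maps $J\mapsto H(J)$ and $H\mapsto I(H)$ are well defined, are mutually inverse bijections, and are order preserving in both directions; since the graded ideals of $L_K(E)$ and the hereditary saturated subsets of $E^0$ each form a lattice under inclusion, an order isomorphism between them is automatically a lattice isomorphism, so this suffices. Well-definedness of $J\mapsto H(J)$ is Remark~\ref{papaterra}. Well-definedness of $H\mapsto I(H)$, i.e.\ that $I(H)$ is graded, is immediate from \eqref{jacare}, since each spanning element $\gamma\lambda^*$ with $r(\gamma)=r(\lambda)\in H$ lies in $L_K(E)_{|\gamma|-|\lambda|}$, so $I(H)$ is spanned by homogeneous elements. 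Order preservation in both directions is clear from the definitions: $H_1\subseteq H_2$ gives $I(H_1)\subseteq I(H_2)$ through \eqref{jacare}, and $J_1\subseteq J_2$ gives $H(J_1)\subseteq H(J_2)$.

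The content is in showing the two maps are mutually inverse. First I would prove $H(I(H))=H$ for every hereditary saturated $H$. The inclusion $H\subseteq H(I(H))$ is trivial. For the converse I would build an algebra homomorphism $\varphi\colon L_K(E)\to L_K(F)$, where $F$ is the graph with $F^0=E^0\setminus H$, $F^1=\{e\in E^1:r(e)\notin H\}$ and the restricted range and source maps, defined on generators by sending each of $v,e,e^*$ to its namesake in $F$ when that vertex or edge survives, and to $0$ otherwise. Relations (1)--(3) of Definition~\ref{roncando} are routine; relation (4) at $v\in H$ holds because hereditariness of $H$ sends every summand on the right to $0$, and at $v\in E^0_\reg\setminus H$ it holds because saturation of $H$ forces $v$ to still emit an edge in $F$, so $v$ remains regular in $F$ and the image of the relation is exactly relation (4) for $v$ in $L_K(F)$. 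By the universal property of $L_K(E)$ the homomorphism $\varphi$ exists; it annihilates the generators of $I(H)$, hence all of $I(H)$, while $\varphi(v)=v\neq 0$ in $L_K(F)$ for every $v\in E^0\setminus H$, because vertices are nonzero idempotents in any Leavitt path algebra. Thus no vertex outside $H$ lies in $I(H)$, giving $H(I(H))\subseteq H$.

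Next I would prove $I(H(J))=J$ for every graded ideal $J$. If $J=0$ this is clear, since then $H(J)=\emptyset$ and $I(\emptyset)=0$. If $J\neq0$, put $H=H(J)$, which is hereditary and saturated, and note $I(H)\subseteq J$ since $H\subseteq J$. For the reverse inclusion I would invoke Proposition~\ref{propranga}: as $E$ is row-finite it has no infinite emitters, so $B_H=\emptyset$ and hence $S=\emptyset$, and $J$ is generated by $H\cup Y$, where $Y\subseteq J$ consists of elements $y=u+\sum_{i=1}^{n}k_ig^{r_i}$ with $g$ a cycle based at some $u\in E^0\setminus H$ and each $r_i$ a positive integer. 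Since $|g|\geq1$, the homogeneous component of $y$ of degree $0$ is precisely $u$, so gradedness of $J$ forces $u\in J$, i.e.\ $u\in H(J)=H$, contradicting $u\in E^0\setminus H$. Hence $Y=\emptyset$, so $J$ is generated by $H$ and $J=I(H)=I(H(J))$.

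The step I expect to be the main obstacle is the inclusion $H(I(H))\subseteq H$: the remaining verifications are essentially bookkeeping, but this one requires the quotient-graph homomorphism above, and the crucial point in that construction is that saturation of $H$ is exactly what keeps the Cuntz--Krieger relation (4) valid at the vertices of $E^0_\reg\setminus H$. Row-finiteness, in turn, is what makes the inclusion $J\subseteq I(H(J))$ work: for graphs with infinite emitters, breaking vertices give rise to graded ideals that are not of the form $I(H)$ --- precisely the phenomenon that the set $S$ in Proposition~\ref{propranga} records --- so the theorem genuinely needs the row-finite hypothesis, and its correct extension to arbitrary graphs must incorporate the breaking vertices.
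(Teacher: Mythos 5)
Your proof is correct. Note that the paper itself offers no proof of this statement --- it is quoted verbatim from \cite[Theorem~2.5.9]{book} --- so the only comparison available is with the standard argument in that reference. Your treatment of $H(I(H))=H$ via the quotient-graph homomorphism $L_K(E)\to L_K(E/H)$ is essentially the canonical one, and you correctly isolate the two pressure points: hereditariness kills relation (4) at vertices of $H$, and saturation guarantees that a regular vertex outside $H$ still emits an edge into $E^0\setminus H$, so relation (4) survives in the quotient graph. Where you genuinely diverge is in the direction $J\subseteq I(H(J))$: the usual proof runs the induced graded homomorphism $L_K(E/H)\simeq L_K(E)/I(H)\to L_K(E)/J$ through the Graded Uniqueness Theorem, whereas you instead invoke Rangaswamy's generator description (Proposition~\ref{propranga}), observe that row-finiteness empties $B_H$ and hence $S$, and that gradedness of $J$ forces the degree-zero component $u$ of any element of $Y$ into $H(J)=H$, contradicting $u\in E^0\setminus H$, so $Y=\emptyset$. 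This is a legitimate and self-contained alternative; it trades the uniqueness theorem for Rangaswamy's structure theorem, which is exactly the tool the present paper already has on the table (and the same degree-zero-component trick reappears in the paper's proof of Theorem~\ref{avai ganhou do vasco}). Your closing remark about breaking vertices correctly identifies why the row-finite hypothesis is needed and how the statement must be modified for arbitrary graphs.
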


From the above theorem, we get the following characterization of $I(H(J))$.

\begin{Corollary}\label{peixeastronauta}\cite[Lemma 2.8.9]{book}
Let $E$ be a graph and $J$ an ideal of $L_K(E)$. Then $I(H(J))$ is the largest graded ideal contained in $J$.
\end{Corollary}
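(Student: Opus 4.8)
The statement bundles three things: that $I(H(J))$ is graded, that $I(H(J))\subseteq J$, and that every graded ideal contained in $J$ already sits inside $I(H(J))$; I would prove them in that order. The first two are quick. Since $H(J)$ is hereditary and saturated (Remark~\ref{papaterra}), by \eqref{jacare} the ideal $I(H(J))$ is the $K$-span of the monomials $\gamma\lambda^*$ with $r(\gamma)=r(\lambda)\in H(J)$; each such monomial is homogeneous of degree $|\gamma|-|\lambda|$, so $I(H(J))$ is spanned by homogeneous elements, hence a graded subspace, and it is an ideal by construction. For the inclusion $I(H(J))\subseteq J$, every vertex of $H(J)$ lies in $J$ by the definition of $H(J)$, and $J$ is an ideal, so the ideal generated by $H(J)$ is contained in $J$.

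The real work is the maximality. Let $I$ be a graded ideal with $I\subseteq J$. Since $v\in I$ forces $v\in J$, we get $H(I)\subseteq H(J)$, and hence $I(H(I))\subseteq I(H(J))$, simply because the ideal generated by a set grows with the set. When $E$ is row-finite this already settles it: by Theorem~\ref{peixevoador} the maps $H\mapsto I(H)$ and $J\mapsto H(J)$ are mutually inverse on graded ideals, so $I=I(H(I))\subseteq I(H(J))$. For an arbitrary graph I would use the description of graded ideals underlying Proposition~\ref{propranga}: a graded ideal $I$ is generated by $H(I)$ together with the breaking-vertex elements $v^{H(I)}$ for $v$ in $S=\{v\in B_{H(I)}:v^{H(I)}\in I\}$. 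The vertices of $H(I)$ already lie in $H(J)\subseteq I(H(J))$, so the whole problem reduces to showing that each $v^{H(I)}$ with $v\in S$ lies in $I(H(J))$.

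I expect this last reduction to be the main obstacle: ruling out that a graded ideal can escape $I(H(J))$ through a breaking vertex of $H(I)$. The plan is to combine the fact that $v^{H(I)}=v-\sum_{s(e)=v,\ r(e)\notin H(I)}ee^*$ lies in $J$ with the hereditary and saturated structure of $H(J)$, so as to rewrite $v^{H(I)}$ as a combination of monomials $\gamma\lambda^*$ with $r(\gamma)=r(\lambda)\in H(J)$, placing it inside $I(H(J))$ by \eqref{jacare}. The delicate part is the bookkeeping that compares $H(I)$ with $H(J)$, and $B_{H(I)}$ with $B_{H(J)}$, against the precise shape of the elements $v^{H}$. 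For row-finite graphs the breaking-vertex set is empty, so this difficulty disappears entirely, which is exactly why Theorem~\ref{peixevoador} disposes of that case with no extra effort.
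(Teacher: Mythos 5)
Your gradedness and containment steps are fine, and your row-finite argument via Theorem~\ref{peixevoador} is complete; note also that the paper offers no proof of this corollary (it simply cites \cite[Lemma~2.8.9]{book}), so there is no internal argument to measure against. The genuine gap is exactly where you flag it: the arbitrary-graph case is left as a plan, and that plan cannot be carried out, because the reduction to showing $v^{H(I)}\in I(H(J))$ for breaking vertices fails. Concretely, let $E^0=\{v,w\}$ with one loop $f$ at $v$ and infinitely many edges from $v$ to $w$. Then $H=\{w\}$ is hereditary and saturated, $v\in B_H$ with $v^H=v-ff^*$, and $J:=I(H\cup\{v^H\})$ is a graded ideal with $H(J)=\{w\}$ (the admissible-pair description of graded ideals gives $J\cap E^0=H$). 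Yet $v^H\notin I(\{w\})$: the quotient $L_K(E)/I(\{w\})$ is the Leavitt path algebra of the graph obtained from $E$ by deleting $w$ and adjoining a new vertex $v'$ with an edge $f':v\to v'$, and there the image of $v-ff^*$ is $f'(f')^*\neq 0$. So $J$ is a graded ideal contained in $J$ which is strictly larger than $I(H(J))$.

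This shows the obstacle is not bookkeeping but a failure of the statement as literally written (with $I(H)$ as in (\ref{jacare})) once breaking vertices are present: the largest graded ideal contained in $J$ is $I\bigl(H(J)\cup\{v^{H(J)}:v\in S\}\bigr)$ with $S=\{v\in B_{H(J)}:v^{H(J)}\in J\}$, which is what \cite[Lemma~2.8.9]{book} actually asserts, and which collapses to $I(H(J))$ exactly in the situations your proof already covers (e.g.\ row-finite graphs, where $B_{H(J)}=\emptyset$). So you should either restrict the claim to row-finite graphs, where your argument is a complete and correct proof, or replace $I(H(J))$ by the ideal generated by $H(J)\cup\{v^{H(J)}:v\in S\}$ and redo the maximality step for that larger ideal; as written, the final reduction you propose is aimed at proving something false.
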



\begin{Definition}\cite[Definition~2.4.11]{book} (The quotient graph by a hereditary subset) Let $E$ be an arbitrary graph, and let $H$ be a hereditary subset of $E^0$. We denote by $E/H$ the quotient graph of $E$ by $H$, defined as follows:
\[(E/H)^0 = E^0 \setminus H \text{ and } (E/H)^1=\{e\in E^1:r(e)\notin H\}.\]
The range and source functions for $E/H$ are defined by restricting the range and source functions of $E$ to $(E/H)$
\end{Definition}

For any 
graph $E$ and ideal $J$ in $L_K(E)$, we 
get the subgraph $E/H(J)$ (notice that $H(J)$ is hereditary, by Remark \ref{papaterra}). We then have the following.

\begin{Proposition}\cite[Corollary~2.4.13]{book}\label{urso}
Let $J$ be a graded ideal of $L_K(E)$.
Then $L_K(E)/J \simeq L_K(E/H(J))$. 
\end{Proposition}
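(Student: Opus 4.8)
Set $H:=H(J)$. By Remark~\ref{papaterra} the set $H$ is hereditary and saturated, and since $J$ is graded it coincides with the largest graded ideal contained in $J$, which by Corollary~\ref{peixeastronauta} is $I(H)$; thus $J=I(H)$. Let $\pi\colon L_K(E)\to L_K(E)/J$ be the quotient map. We may assume $E$ is row-finite, which is the generality in which the statement will be applied (see the last paragraph for why this hypothesis is needed). The plan is to produce, via the universal properties of $L_K(E)$ and of $L_K(E/H)$, a pair of mutually inverse $K$-algebra homomorphisms between $L_K(E)/J$ and $L_K(E/H)$.

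First I would use Definition~\ref{roncando} to define a homomorphism $\psi\colon L_K(E)\to L_K(E/H)$ on generators by $\psi(v)=v$ if $v\in E^0\setminus H$, $\psi(v)=0$ if $v\in H$, $\psi(e)=e$ and $\psi(e^{*})=e^{*}$ if $r(e)\notin H$, and $\psi(e)=\psi(e^{*})=0$ if $r(e)\in H$. Relations (1)--(3) are routine, using that $H$ is hereditary (so $s(e)\notin H$ whenever $r(e)\notin H$); for relation~(4) at a regular vertex $v$, if $v\in H$ both sides are killed since $s(e)=v$ forces $r(e)\in H$, while if $v\notin H$ then saturation of $H$ gives $r(s^{-1}(v))\not\subseteq H$, so $v$ is regular in $E/H$ and $\psi$ turns relation~(4) for $v$ in $E$ into relation~(4) for $v$ in $E/H$. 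Thus $\psi$ is well defined, and it annihilates every $\gamma\lambda^{*}$ with $r(\gamma)=r(\lambda)\in H$, so by \eqref{jacare} we get $J=I(H)\subseteq\ker\psi$ and $\psi$ descends to $\bar{\psi}\colon L_K(E)/J\to L_K(E/H)$. In the reverse direction, by the universal property of $L_K(E/H)$ it suffices to check that the family $\{\pi(v):v\in(E/H)^0\}\cup\{\pi(e),\pi(e^{*}):e\in(E/H)^1\}$ in $L_K(E)/J$ satisfies relations (1)--(4): relations (1)--(3) pass through $\pi$ from $L_K(E)$, and for (4) a regular vertex $v$ of $E/H$ is, since $E$ is row-finite, regular in $E$, so $v=\sum_{s(e)=v}ee^{*}$ in $L_K(E)$; applying $\pi$ and using $\pi(ee^{*})=\pi(e)\pi(r(e))\pi(e^{*})=0$ for every $e$ with $r(e)\in H\subseteq J$ yields $\pi(v)=\sum_{s(e)=v,\,r(e)\notin H}\pi(e)\pi(e^{*})$, which is relation~(4) for $v$ in $E/H$. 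This produces $\phi\colon L_K(E/H)\to L_K(E)/J$ with $\phi(v)=\pi(v)$, $\phi(e)=\pi(e)$, $\phi(e^{*})=\pi(e^{*})$.

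Finally, one checks that $\bar{\psi}\circ\phi$ and $\phi\circ\bar{\psi}$ fix the generating sets of $L_K(E/H)$ and of $L_K(E)/J$ respectively (using $\pi(v)=0$ for $v\in H$ and $\pi(e)=0$ for $r(e)\in H$), hence are the identity, so $\phi$ is the desired isomorphism. The one genuinely delicate point — and the reason row-finiteness is needed — is relation~(4) in the construction of $\phi$ at a breaking vertex $w\in B_H$: such a $w$ is regular in $E/H$, so $L_K(E/H)$ imposes $\pi(w^{H})=0$, whereas $w^{H}$ need not belong to $I(H)=J$. For an arbitrary graph one repairs this by replacing $E/H$ with the enlarged quotient graph that protects the breaking vertices, and then the same three-step argument goes through verbatim.
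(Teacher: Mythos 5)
Your proof is correct and is essentially the standard argument: the paper itself gives no proof of Proposition~\ref{urso} (it only cites \cite[Corollary~2.4.13]{book}), and the cited proof proceeds exactly as you do, by building mutually inverse homomorphisms out of the two universal properties. Your closing caveat is also well taken and worth making explicit: for a non-row-finite graph with $B_{H(J)}\neq\emptyset$, the map $\phi$ fails to be well defined at a breaking vertex $w$ unless $w^{H}\in J$, so the isomorphism with $L_K(E/H(J))$ as literally defined requires either row-finiteness (more generally $B_{H(J)}=\emptyset$) or that $J$ contain all the elements $v^{H}$ for $v\in B_{H(J)}$ --- which is consistent with the fact that the paper only invokes this proposition in the row-finite setting of Proposition~\ref{lagosta}.
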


The following result can be used to produce quotient graphs without Condition~(L).

\begin{Proposition}\label{platapus}
Let $J$ be an ideal in $L_K(E)$.
If the graph $E/H(J)$ satisfies Condition~(L), then $J$ is graded.
\end{Proposition}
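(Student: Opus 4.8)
The plan is to argue by contrapositive: assume $J$ is nongraded and show that $E/H(J)$ fails Condition~(L). The key tool is Proposition~\ref{propranga}, which tells us precisely what a nongraded ideal looks like. First I would note that since $J$ is nongraded, the set $Y$ in the statement of Proposition~\ref{propranga} is nonempty, so there exists an element of the form $u + \sum_{i=1}^n k_i g^{r_i} \in J$, where $g$ is a cycle with no exits in $E^0 \setminus H(J)$, based at a vertex $u \in E^0 \setminus H(J)$, with at least one $k_i \neq 0$.

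The crucial observation is that $g$ is a cycle living entirely in $E/H(J)$: indeed $g$ is based at $u \notin H(J)$, and since $H(J)$ is hereditary, every vertex on $g$ lies in $E^0 \setminus H(J) = (E/H(J))^0$; moreover every edge $e$ of $g$ has $r(e) \in (E/H(J))^0$, so $e \in (E/H(J))^1$. Hence $g$ is a genuine cycle in $E/H(J)$. It remains to check that $g$ has no exit in $E/H(J)$. Suppose $f \in (E/H(J))^1$ is an exit for $g$, meaning $s(f) = s(g_j)$ for some $j$ but $f$ is not the corresponding edge of $g$. Then $f \in E^1$ with $r(f) \notin H(J)$, and $f$ would be an exit for $g$ in $E$ whose range lies in $E^0 \setminus H(J)$ — but Proposition~\ref{propranga}(i) says $g$ has \emph{no exits in $E^0 \setminus H$}, a contradiction. (Here I should be careful about the precise meaning of ``no exits in $E^0\setminus H$'' in the source \cite{ranga1}: it should mean no exit $f$ with $r(f)\notin H$, which is exactly what rules out an exit in the quotient graph; I would verify this reading against the reference.) Therefore $g$ is a cycle without exit in $E/H(J)$, so $E/H(J)$ does not satisfy Condition~(L).

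Packaging this up: if $E/H(J)$ satisfies Condition~(L) then no such $g$ can exist, so $Y$ must be empty; but by Proposition~\ref{propranga}, $Y$ is nonempty whenever $J$ is nongraded, hence $J$ is graded. (One should also dispatch the trivial case $J = 0$ separately, since Proposition~\ref{propranga} is stated for nonzero ideals; but the zero ideal is graded, so this is immediate.)

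The main obstacle I anticipate is purely bookkeeping around the word ``exit'' and the phrase ``no exits in $E^0 \setminus H$'': one must make sure that an exit of $g$ inside the quotient graph $E/H(J)$ corresponds exactly to an exit of $g$ in $E$ with range outside $H(J)$, and that the hypothesis in Proposition~\ref{propranga} is strong enough to forbid precisely these. There is also a minor subtlety that $g$ being a cycle requires its vertices to be distinct, which is automatic since $g$ was already a cycle in $E$; and one should confirm that passing to $E/H(J)$ does not accidentally identify or delete any vertex or edge of $g$, which follows from hereditariness as noted above. None of these steps involves a real computation — the whole argument is a short structural deduction from Proposition~\ref{propranga}.
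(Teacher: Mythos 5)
Your argument is correct, but it is not the route the paper takes: the paper gives no argument of its own and instead defers to the proof of \cite[Proposition~2.6]{galera}, which is the uniqueness-theorem argument --- one passes to $L_K(E)/I(H(J)) \simeq L_K(E/H(J))$ (Proposition~\ref{urso}), observes that the image of $J$ there is an ideal containing no vertices, and invokes the fact that in a Leavitt path algebra of a graph satisfying Condition~(L) every nonzero ideal contains a vertex (the algebraic Cuntz--Krieger uniqueness theorem), forcing $J = I(H(J))$ to be graded. You instead run the contrapositive directly through Rangaswamy's structure theorem (Proposition~\ref{propranga}): nongraded forces $Y \neq \emptyset$, which hands you a cycle $g$ with no exits in $E^0 \setminus H(J)$, and you check that $g$ survives as an exit-free cycle in $E/H(J)$, violating Condition~(L). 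Your reading of the phrase ``no exits in $E^0\setminus H$'' is the intended one in \cite{ranga1} (an exit of $g$ in $E$ whose range lies outside $H$ is forbidden; exits ranging into $H$ are allowed and disappear in the quotient graph), and your hereditariness argument that all vertices and edges of $g$ persist in $E/H(J)$ is sound --- the paper itself records the same observation just before Proposition~\ref{bijecao}. The trade-off: your proof leans on the heavier structure theorem for ideals but is entirely self-contained among results already quoted in the paper, whereas the paper's route needs the uniqueness theorem (not otherwise stated here) but reveals more, namely that $J$ actually equals $I(H(J))$.
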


\begin{proof}

The proof of this proposition is the same as the proof of \cite[Proposition~2.6]{galera} and so we refrain from presenting it here. \end{proof}

\section{Regular ideals and quotients}

Our first step in this section is to define regular ideals. Since the analytical notion does not depend on the norm, there is no change in the definition, which we recall below (as defined in \cite{Hamana}, see also \cite{galera}).

Let $A$ be an algebra.
For a subset $X \subseteq A$, we define $X^\perp$ to be the set
$$ X^\perp = \{a \in A \colon ax= xa = 0 \text{ for all }x\in X\}. $$

\begin{Definition}\label{def: reg} (cf. \cite[Definition~3.1]{CCL})
We call an ideal $J \subseteq A$ a \emph{regular ideal} if $J = J^{\perp\perp}$.
\end{Definition}

Note that, as in the C*-algebra case, if $J$ is an ideal in $A$, then so is $J^\perp$. 
It also always holds that $J \subseteq (J^\perp)^\perp$ and $J^\perp = J^{\perp\perp\perp}$. So $J^\perp$ is always regular.
Next, we show that for a graded ideal $J$ of a $\Z$-graded algebra, $J^\perp$ is also graded.

\begin{Lemma}\label{horajantar}
Let $J$ be a graded ideal of a $\Z$-graded algebra. Then $J^\perp$ is a graded regular ideal.
\end{Lemma}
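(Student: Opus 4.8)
The plan is to lean on the general facts recalled just before the statement together with a routine homogeneous-components argument. Since $J^\perp$ is always an ideal and satisfies $J^\perp = J^{\perp\perp\perp}$, it is automatically regular; hence the only thing that needs an argument is that $J^\perp$ is graded.

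Write the grading as $A = \bigoplus_{n\in\Z} A_n$, and let $a \in J^\perp$, decomposed as $a = \sum_n a_n$ with $a_n \in A_n$ and only finitely many $a_n$ nonzero. I want to show $a_n \in J^\perp$ for every $n$. Since $J$ is graded, $J$ is spanned by its homogeneous elements $J\cap A_m$, so by linearity it suffices to check that $a_n x = x a_n = 0$ for every $m$ and every homogeneous $x \in J \cap A_m$.

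Fix such an $x \in J \cap A_m$. From $a\in J^\perp$ we get $0 = ax = \sum_n a_n x$, and $a_n x \in A_{n+m}$, so the terms $a_n x$ lie in pairwise distinct homogeneous components of $A$. As the homogeneous components of $0$ are all $0$, this forces $a_n x = 0$ for all $n$; applying the same reasoning to $0 = x a = \sum_n x a_n$ gives $x a_n = 0$ for all $n$. Thus each $a_n$ annihilates every homogeneous element of $J$ on both sides, hence annihilates all of $J$, i.e. $a_n \in J^\perp$. Therefore $J^\perp = \bigoplus_{n} (J^\perp \cap A_n)$, so $J^\perp$ is graded, and being also regular, it is a graded regular ideal.

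There is essentially no serious obstacle here; the one point that deserves care is the reduction to testing the annihilation condition on \emph{homogeneous} elements of $J$, and this is precisely where the hypothesis that $J$ is graded enters.
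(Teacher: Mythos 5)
Your proof is correct and follows essentially the same route as the paper: regularity of $J^\perp$ comes for free from $J^\perp = J^{\perp\perp\perp}$, and gradedness is obtained by decomposing an element of $J^\perp$ into homogeneous components and testing annihilation against the homogeneous elements of $J$, which suffices precisely because $J$ is graded. No gaps.
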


\begin{proof}
Let $z = \sum\limits_{n\in\Z} z_n \in J^\perp$. We need to prove that each $z_n \in J^\perp$. Let $x\in J$. Since $J$ is graded then $x=\sum\limits x_n$, where each $x_n$ is homogeneous and $x_n\in J$. So it is enough to check that $z_n x_i = x_i z_n = 0$ for each $i,n$.

Since $z\in J^\perp$, and $x_i\in J$ for each fixed $i$, we have that \[\sum_{n\in\Z}  x_i z_n = x_i z =   0= z x_i=\sum_{n\in\Z} z_n x_i,\] and hence the $\Z$-grading of the algebra (and the fact that $x_i$ is homogeneous) implies that $z_n x_i = x_i z_n = 0$ as desired.
\end{proof}

In the spirit of Lemma~\ref{horajantar}, we show below that for Leavitt path algebras, $I^\perp$ is always $\Z$-graded, for each ideal $I$.

\begin{Theorem} \label{avai ganhou do vasco}
Let $E$ be an arbitrary graph and let $I$ be an ideal of $L_K(E)$. Then $I^\perp$  is $\Z$-graded.
\end{Theorem}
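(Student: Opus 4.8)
The plan is to reduce the claim to the much easier Lemma~\ref{horajantar} by showing that $I^\perp$ depends only on the largest graded ideal contained in $I$. More precisely, I would prove that $I^\perp = I(H(I))^\perp$. Since $I(H(I))$ is a graded ideal (it is the ideal generated by a hereditary saturated set, hence graded by the description in \eqref{jacare} together with the fact that each spanning element $\gamma\lambda^*$ is homogeneous when $|\gamma|=|\lambda|$... more carefully, $I(H)$ is graded because $H\subseteq I(H)$ is a set of homogeneous elements of degree $0$ and the ideal it generates is spanned by $\gamma\lambda^*$ with $r(\gamma)=r(\lambda)\in H$, and one checks directly this is a graded subspace), an application of Lemma~\ref{horajantar} to $J = I(H(I))$ then gives that $I(H(I))^\perp = I^\perp$ is graded, which is exactly the theorem.

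So the real content is the identity $I^\perp = I(H(I))^\perp$. One inclusion is immediate: $I(H(I)) \subseteq I$ by Corollary~\ref{peixeastronauta} (it is the largest graded ideal contained in $I$), and $X \subseteq Y$ forces $Y^\perp \subseteq X^\perp$, so $I^\perp \subseteq I(H(I))^\perp$. For the reverse inclusion $I(H(I))^\perp \subseteq I^\perp$, I would take $z \in I(H(I))^\perp$ and an arbitrary $a \in I$ and show $za = az = 0$. Here I would invoke the structural description of $I$ from Proposition~\ref{propranga}: $I$ is generated by $H \cup \{v^H : v \in S\} \cup Y$ where $H = H(I)$, the $v^H$ are the adjusted breaking-vertex elements, and $Y$ consists of elements of the form $u + \sum_i k_i g^{r_i}$ for cycles $g$ with no exit in $E^0\setminus H$. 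It suffices to check that $z$ annihilates each of these generators on both sides, since the set of elements annihilated (two-sidedly) by $z$ is an ideal once $z$ is in the center of... no — rather, $\{a : za = az = 0\}$ is a right ideal in the first coordinate and a left ideal, and being closed under the relevant operations, it is an ideal; so it is enough to kill the generators.

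The generators in $H$ are killed automatically since $H \subseteq I(H(I))$ and $z \in I(H(I))^\perp$. The main work is with the elements $v^H$ and the cycle-type elements $u + \sum_i k_i g^{r_i}$. For $v^H = v - \sum_{s(e)=v,\, r(e)\notin H} ee^*$: the vertex $v$ and the tails $ee^*$ with $r(e)\in H$ are all in $I(H(I))$ in an appropriate sense, so $z(v^H)$ and $(v^H)z$ should reduce, modulo elements $z$ already annihilates, to something supported on the "non-$H$" part; here I expect one needs the extra hypothesis that $v^H\in I$ together with a small computation showing the non-$H$ part is forced to vanish because $z$ also annihilates $v$ itself as... hmm, this is the delicate point. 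For the cycle elements, the key is that $g$ has no exit in $E^0\setminus H$, so the relevant corner $u L_K(E) u$ modulo the ideal generated by the vertices on $g$ lying in $H$ looks like $K[x,x^{-1}]$, and a nonzero Laurent polynomial $u + \sum k_i g^{r_i}$ together with $z$ annihilating $u$... Actually, I suspect the cleanest route is: show $g^0 \subseteq E^0\setminus H$ consists of vertices on which $z$ acts trivially forces, via the polynomial being nonzero, that $z u = 0$, and then conclude.

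The step I expect to be the main obstacle is precisely this last one: showing that a perpendicular element $z$ of $I(H(I))^\perp$ must annihilate the "extra" non-graded generators $v^H$ (for $v\in S$) and $u+\sum k_i g^{r_i}$ of $I$. The graded part of $I$ is handled cleanly by $I(H(I))$, but the non-graded generators are exactly what distinguishes $I$ from its largest graded subideal, and one must leverage both (a) the no-exit condition on the cycles — which localizes the problem to a commutative Laurent-polynomial situation where a nonzero polynomial times $z$ vanishing should force $zu=0$ — and (b) the defining relation $v^H = v - \sum ee^*$ to transfer annihilation from the $H$-part to all of $v^H$. Getting these two sub-cases right, with careful bookkeeping of which products land in $I(H(I))$, is where the proof will actually be spent; everything else is formal manipulation of the $(\cdot)^\perp$ operation and citations of the results already recalled above.
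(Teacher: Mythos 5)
Your reduction hinges on the identity $I^\perp = I(H(I))^\perp$, and that identity is false. Take $E$ to be the graph with one vertex $v$ and one loop, so $L_K(E)=K[x,x^{-1}]$, and let $I$ be the (non-graded, proper) ideal generated by $v+x$. Since $v+x$ is not a unit, $v\notin I$, so $H(I)=\emptyset$ and $I(H(I))=\{0\}$, whence $I(H(I))^\perp = L_K(E)$. But $K[x,x^{-1}]$ is a commutative domain and $I\neq 0$, so $I^\perp=\{0\}$. Thus the inclusion $I(H(I))^\perp \subseteq I^\perp$, which you correctly identify as the real content of your argument, simply does not hold: an element of $I(H(I))^\perp$ (here, $v$ itself) need not annihilate the non-graded generators $u+\sum_i k_i g^{r_i}$ at all. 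The difficulty you flag as ``the delicate point'' is not a technical obstacle to be overcome by careful bookkeeping --- it is where the approach breaks. (A secondary issue: $\{a: za=az=0\}$ is not a two-sided ideal, since $(ax)z=a(xz)$ need not vanish, so even granting the generators, checking them alone would not suffice without an extra argument.)

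The paper's proof avoids this entirely by never trying to relate $I^\perp$ to $I(H(I))^\perp$. Instead it takes an arbitrary $z\in I^\perp$ (so $z$ already annihilates all generators from Proposition~\ref{propranga}) and shows each homogeneous component $z_n$ does too. For a generator $u+\sum_i k_i g^{r_i}$ the key observation is the degree structure: $u$ is homogeneous of degree $0$ while the $g^{r_i}$ have distinct positive degrees, so if $n_0$ is the least degree with $z_{n_0}u\neq 0$, the degree-$n_0$ component of $0=z\bigl(u+\sum_i k_i g^{r_i}\bigr)$ is exactly $z_{n_0}u$, a contradiction; hence $z_n u=0$ for all $n$ and then $z_n g^{r_i}=z_n u g^{r_i}=0$. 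The ``not an ideal'' issue is handled by intersecting four graded annihilator-type sets $V, V', W, W'$ (the latter two requiring $zxa=0$ and $axz=0$ for all $x$). If you want to salvage your write-up, replace the false identity with this direct lowest-degree-component argument on the generators of $I$ itself.
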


\begin{proof} The case $I=0$ is trivial. Moreover, from Lemma~\ref{horajantar}, we get that if $I$ is graded then $I^\perp$ is graded. So, we may suppose that $0\neq I$ and that $I$ is non-graded. Let $H=H(I)$, which, by Remark~\ref{papaterra}, is hereditary and saturated. It follows from Proposition~\ref{propranga} that $I$ is generated by $A=H\cup \{v^H:v\in S\}\cup Y$, with $Y$ nonempty. 

Define the subspace $$V=\{z\in L_K(E):za=0 \,\,\forall a\in A\}.$$ We show that $V$ is $\Z$-graded, according to the $\Z$-grading of $L_K(E)$. Let $z\in V$ and decompose it in its homogeneous components, say $z=\sum \limits_{n\in F} z_n$ where $F\subseteq \Z$ is a finite set. Let $a\in H\cup \{v^H:v\in S\}$. Then, $0=za=\sum\limits_{n\in F} z_na$ and, since $a$ has degree 0 and $L_K(E)$ is $\Z-$graded, we obtain that $z_na=0$ for each $n\in F$. Let $u+\sum\limits_i {k_i}g^{r_i}\in Y$ and define $P=\{n\in F:z_nu\neq 0\}$ and $Q=\{n\in F:z_nu=0\}$. Suppose that $P\neq \emptyset$ and let $n_0$ be the minimum of $P$. Since $g$ is a cycle based on the vertex $u$, we have that  $z_n(u+\sum\limits_i {k_i}g^{r_i})=0$ for each $n\in Q$. Hence,  $$0=z(u+\sum\limits_i {k_i}g^{r_i})=\sum\limits_{n\in P}z_n(u+\sum\limits_i {k_i}g^{r_i})=\sum\limits_{n\in P}z_nu+\sum\limits_{n\in P}\sum\limits_i z_n{k_i}g^{r_i}.$$ Since $L_K(E)$ is $\Z$-graded, we obtain that $z_{n_0}u=0$, which is impossible, since $n_0\in P$. Then $P=\emptyset$, and it follows that $z_nu=0$ for each $n\in F$, and so $z_n(u+\sum\limits_i {k_i}g^{r_i})=0$ for each $n\in F$. 
This shows that $z_nx=0$ for each $x\in H\cup \{v^H:v\in S\}\cup Y$, and hence $z_n\in V$ for each $n$. Therefore, $V$ is a graded subspace of $L_K(E)$.

Analogously, one shows that $$V'=\{z\in L_K(E):az=0\,\,\forall a\in A\}$$ is a graded subspace of $L_K(E)$.

Let $$W=\{z\in L_K(E):\,\,zxa=0 \text{ for each } x\in L_K(E) \text{ and }a\in A\}.$$ We show that $W$ is a $\Z$-graded subspace. First, notice that $z\in W$ if, and only if, $zxa=0$ for each homogeneous element $x\in L_K(E)$ and each $a\in A$. Let $z\in W$ and decompose it in homogeneous components, say $z=\sum\limits_{n\in F'} z_n$ where $F'$ is a finite subset of $\Z$. Then, for each  
homogeneous element $x$ of $L_K(E)$, we get that $zx=\sum\limits_{n\in F'} z_nx\in V$. Since $V$ is graded, we obtain that $z_nx\in V$, and so $z_nx a=0$ for each $a\in A$. Therefore, $z_n\in W$ for each $n\in F'$ and hence $W$ is graded. 

Analogously, one proves that $$W'=\{z\in L_K(E):\,\,axz=0 \text{ for each } x\in L_K(E) \text{ and }a\in A\}$$ is graded.

Finally, notice that $$I^{\perp}=V\cap V'\cap W\cap W',$$ which implies, since $V,V',W,W'$ are all graded, that $I^\perp$ is graded. \end{proof}

As a consequence of the previous theorem, we obtain that all regular ideals in $L_K(E)$ are graded. This is different from the analytical setting (see Proposition~3.7 in \cite{galera} and Remark~\ref{ondesol}).

\begin{Corollary}\label{regular are graded} Let $E$ be an arbitrary graph and $I$ a regular ideal of $L_K(E)$. Then $I$ is $\Z$-graded.
\end{Corollary}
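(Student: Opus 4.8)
The plan is extremely short, because Corollary~\ref{regular are graded} is an immediate consequence of Theorem~\ref{avai ganhou do vasco}. First I would recall that, by Definition~\ref{def: reg}, a regular ideal $I$ of $L_K(E)$ satisfies $I = I^{\perp\perp}$. Now observe that $I^{\perp\perp} = (I^\perp)^\perp$, that is, $I^{\perp\perp}$ is itself the perp of the ideal $I^\perp$ of $L_K(E)$.

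Next I would simply invoke Theorem~\ref{avai ganhou do vasco}: since $I^\perp$ is an ideal of $L_K(E)$, the theorem (applied with the ideal $I^\perp$ in place of $I$) tells us that $(I^\perp)^\perp$ is $\Z$-graded. Combining this with the identification $I = I^{\perp\perp} = (I^\perp)^\perp$ from the previous step yields that $I$ is $\Z$-graded, which is exactly the claim.

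There is essentially no obstacle here: the only point to be careful about is the bookkeeping that $J^\perp$ is again an ideal (noted in the paragraph after Definition~\ref{def: reg}) so that Theorem~\ref{avai ganhou do vasco} is genuinely applicable to $I^\perp$, and the trivial algebraic identity $I^{\perp\perp}=(I^\perp)^\perp$. All the real work was done in the proof of the theorem. Concretely, the proof I would write is:

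\begin{proof}
By Definition~\ref{def: reg}, $I = I^{\perp\perp}$. Since $I^\perp$ is an ideal of $L_K(E)$, Theorem~\ref{avai ganhou do vasco} applied to the ideal $I^\perp$ gives that $(I^\perp)^\perp = I^{\perp\perp}$ is $\Z$-graded. Hence $I = I^{\perp\perp}$ is $\Z$-graded.
\end{proof}
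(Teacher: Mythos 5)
Your proof is correct and is essentially identical to the paper's: both apply Theorem~\ref{avai ganhou do vasco} to the ideal $I^\perp$ to conclude that $I^{\perp\perp}$ is graded, and then use $I = I^{\perp\perp}$. No issues.
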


\begin{proof}
Let $I$ be an ideal of $L_K(E)$. Since $I^\perp$ is also an ideal, by Theorem~\ref{avai ganhou do vasco}, $I^{\perp\perp}$ is graded. Hence, $I=I^{\perp\perp}$ is graded.
\end{proof}

Our next goal is to describe the vertex set of a regular ideal. For this, we need the following definitions.

\begin{Definition}
For an ideal $I$ of $L_K(E)$, we define $\overline{H}(I)\subseteq
   E^0$ as 
$$ \overline{H}(I) := \{ s(\alpha): \alpha\in E^* \text{ and }  r(\alpha)\in H(I)\}.$$
\end{Definition}

The set $\overline{H}(I)$ should be thought of as some kind of "closure" of the set $H(I)$. In particular notice that it always hold that $H(I)\subseteq \overline{H}(I)$.

\begin{Definition}
Let $E$ be a graph and $v\in E^0$. The tree of $v$, denoted $T(v)$, is the set \[T(v):=\{r(\alpha):\alpha\in E^*, s(\alpha)=v\}.\]
\end{Definition}

\begin{Proposition}\label{tomaraqdc} (cf. \cite[Proposition~3.2]{CCL})
Let $E$ be a row-finite directed graph and $J \subseteq L_K(E)$ be a graded ideal.
Then
\begin{enumerate}
\item $J^\perp = I(E^0\backslash \ol{H}(J))$;
\item $J^{\perp\perp} = I(\{w \in E^0 \colon T(w) \subseteq \ol{H}(J)\})$;
\item $J$ is regular if and only if $H(J) = \{w \in E^0 \colon T(w) \subseteq \ol{H}(J)\}$.
\end{enumerate}
\end{Proposition}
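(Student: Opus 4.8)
The plan is to establish the three statements in sequence, with (ii) and (iii) following formally once (i) is in hand. For (i), I would use the hypothesis that $E$ is row-finite, so that by Theorem~\ref{peixevoador} the graded ideal $J$ is determined by its hereditary saturated vertex set $H=H(J)$, and $J=I(H)$ with the spanning description \eqref{jacare}. First I would check that $I(E^0\setminus\ol H(J))\subseteq J^\perp$: since $\ol H(J)$ is precisely the set of vertices that can reach $H$, any vertex $w\notin\ol H(J)$ has $T(w)\cap H=\emptyset$ and $w$ is not reachable from $H$ either (because $H$ is hereditary, so if $v\in H$ and there were a path from $v$ to $w$, then $w\in H\subseteq\ol H(J)$); hence the monomials $\gamma\lambda^*$ generating $I(E^0\setminus\ol H(J))$ annihilate, on both sides, the monomials $\mu\nu^*$ with $r(\mu)=r(\nu)\in H$ generating $J$, using the relation $e^*f=\delta_{e,f}r(e)$ to see that no nonzero product survives. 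For the reverse inclusion $J^\perp\subseteq I(E^0\setminus\ol H(J))$, I would note that $J^\perp$ is an ideal, and by Theorem~\ref{avai ganhou do vasco} (or Lemma~\ref{horajantar}) it is graded, hence again by Theorem~\ref{peixevoador} equal to $I(H(J^\perp))$; so it suffices to show $H(J^\perp)=E^0\setminus\ol H(J)$. If $v\in\ol H(J)$, pick $\alpha$ with $s(\alpha)=v$, $r(\alpha)\in H$; then $\alpha\alpha^*\in J$ and $v\cdot\alpha\alpha^*=\alpha\alpha^*\neq 0$, so $v\notin J^\perp$, i.e. $v\notin H(J^\perp)$. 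Conversely if $v\notin\ol H(J)$ then $v\in I(E^0\setminus\ol H(J))\subseteq J^\perp$, so $v\in H(J^\perp)$. This gives $H(J^\perp)=E^0\setminus\ol H(J)$, proving (i).

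For (ii), I would apply (i) twice. By (i), $J^\perp=I(E^0\setminus\ol H(J))$, which is graded, so $H(J^\perp)=E^0\setminus\ol H(J)$ and $\ol H(J^\perp)$ is the set of vertices that can reach $E^0\setminus\ol H(J)$. Applying (i) again to the graded ideal $J^\perp$ gives
\[
J^{\perp\perp}=(J^\perp)^\perp=I\bigl(E^0\setminus\ol H(J^\perp)\bigr).
\]
It then remains to identify $E^0\setminus\ol H(J^\perp)$ with $\{w\in E^0:T(w)\subseteq\ol H(J)\}$. A vertex $w$ fails to lie in $\ol H(J^\perp)$ exactly when no path from $w$ ends at a vertex of $E^0\setminus\ol H(J)$, i.e. when $T(w)\cap(E^0\setminus\ol H(J))=\emptyset$, i.e. when $T(w)\subseteq\ol H(J)$. (Here row-finiteness is not strictly needed for this set identity, but it is needed for the use of Theorem~\ref{peixevoador}.) This yields (ii).

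Statement (iii) is then immediate: $J$ is regular means $J=J^{\perp\perp}$. Since $J$ is graded, $J=I(H(J))$, and by (ii) $J^{\perp\perp}=I(\{w:T(w)\subseteq\ol H(J)\})$; two graded ideals of a row-finite graph coincide iff their vertex sets coincide (Theorem~\ref{peixevoador}), so $J=J^{\perp\perp}$ iff $H(J)=\{w\in E^0:T(w)\subseteq\ol H(J)\}$. Note also that the inclusion $H(J)\subseteq\{w:T(w)\subseteq\ol H(J)\}$ always holds, since $H(J)$ is hereditary, so regularity is really the assertion that this inclusion is an equality.

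I expect the main obstacle to be the careful bookkeeping in the first inclusion of (i): verifying that a generic element of $I(E^0\setminus\ol H(J))$ annihilates $J$ on both sides requires expanding products of monomials $\gamma\lambda^*\cdot\mu\nu^*$ and checking, via the relations in Definition~\ref{roncando} and the hereditary/``co-hereditary'' separation of $\ol H(J)$ from $E^0\setminus\ol H(J)$, that every term collapses to zero; the subtlety is making sure one has correctly used that $H$ being hereditary forces $E^0\setminus\ol H(J)$ to receive no edges from $\ol H(J)$ and emit none into it along paths reaching $H$. Everything after (i) is formal manipulation with Theorem~\ref{peixevoador} and the definitions of $\ol H$ and $T(\cdot)$.
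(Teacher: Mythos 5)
Your proposal is correct, and its overall strategy coincides with the paper's: in both arguments the key step is that $J^\perp$ is graded (Lemma~\ref{horajantar}), so that by Theorem~\ref{peixevoador} everything reduces to identifying vertex sets, namely $H(J^\perp)=E^0\setminus\ol{H}(J)$ for (i) and then a matching identification for (ii), with (iii) falling out of the lattice isomorphism. The one place where you genuinely diverge is part (ii): the paper proves $H(J^{\perp\perp})=\{w: T(w)\subseteq\ol{H}(J)\}$ by a direct two-sided element computation (exhibiting $\alpha z\in J^\perp$ with $w\alpha z\neq 0$ for one inclusion, and analyzing $u\gamma\lambda^*\neq 0$ or $\gamma\lambda^*u\neq 0$ for the other), whereas you simply apply (i) to the graded ideal $J^\perp$ and observe the set identity $E^0\setminus\ol{H}(J^\perp)=\{w: T(w)\subseteq\ol{H}(J)\}$. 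Your route is a clean formal bootstrap that avoids repeating the monomial bookkeeping; the paper's direct argument makes the same content explicit at the level of elements. Both are valid, and your verification of the first inclusion of (i) (that $\gamma\lambda^*\mu\nu^*$ and $\mu\nu^*\gamma\lambda^*$ vanish when $r(\lambda)\notin\ol{H}(J)$ and $r(\mu)\in H$, since neither path can extend the other without contradicting heredity of $H$ or the definition of $\ol{H}(J)$) is exactly the point the paper compresses into ``$w\alpha=0=\alpha^*w$.''
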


\begin{proof}

By Lemma~\ref{horajantar}, $J^\perp$ is graded.
Hence by Theorem~\ref{peixevoador} $J^\perp = I(H(J^\perp))$.
Thus, to prove (i), it suffices to prove that $H(J^\perp) = E^0\backslash \ol{H}(J)$. This is done similarly to what is done to prove the item (i) in \cite[Proposition~3.4]{galera}. We include the proof for completeness.

First we prove that $H(J^\perp) \subseteq E^0\backslash \ol{H}(J)$. Let $v\in H(J^\perp)$ (so $v\in J^\perp$). If $v\in \overline{H}(J)$ then there exists $\alpha \in E^*$ such that $v=s(\alpha)$ and $r(\alpha) \in H(J)$. But then $\alpha \in J$, since $I(H(J))=J$ (by Theorem~\ref{peixevoador}) and $\alpha$ belongs to the ideal generated by $H(J)$. Hence $v \alpha = \alpha \neq 0$, a contradiction, since $v\in J^\perp$ and $\alpha\in J$. 

For the converse inclusion, let $w\notin \ol{H}(J)$. By definition, for each $\alpha \in E^*$ with $r(\alpha)\in H(J)$ we have $s(\alpha)\in \ol{H}(J)$. Hence, for such $\alpha$, $w s(\alpha)=0$ and therefore $w \alpha=0=\alpha^* w $. Using again that $I(H(J))=J$ we obtain, by (\ref{jacare}), that $$ J :=\spn\{\gamma \lambda^*: \gamma, \lambda\in E^* \text{ and } r(\gamma)=r(\lambda)\in H(J)\}.$$
So  $w\in J^\perp$ and hence $w\in H(J^\perp)$.

Next we prove (ii). Since $J^{\perp\perp} = I (H(J^{\perp\perp}))$, it is enough to prove that $H(J^{\perp\perp}) = \{w \in E^0 \colon T(w) \subseteq \ol{H}(J)\}$.

Take $w\in E^0 $ with $z\in T(w)$ such that $z \notin \ol{H}(J)$. By (i) $z\in J^\perp$. Let $\alpha \in E^*$ be such that $s(\alpha)=w$ and $r(\alpha)=z$. Then $\alpha z \in J^\perp$ and $w \alpha z = \alpha \neq 0$. Hence $w \notin J^{\perp\perp}$.

On the other hand, if $u\notin H(J^{\perp\perp})$ then, by (i) and (\ref{jacare}), there exists $\gamma, \lambda \in E^*$ with $r(\gamma)=r(\lambda) \in E^0\backslash \ol{H}(J) $ and such that $u \gamma \lambda^*\neq 0$ or $\gamma \lambda^* u \neq 0$. Assume that $u \gamma \lambda^*\neq 0$ and $|\gamma|>0$ (the case $|\gamma|=0$ is direct). Then $s(\gamma) = u$ and hence $r(\gamma) \in T(u)$, what implies that $u\notin \{w \in E^0 \colon T(w) \subseteq \ol{H}(J)\}$. The case $\gamma \lambda^* u \neq 0$ is dealt with analogously.

We now prove (iii). As proved above, $H(J^{\perp\perp}) = \{w \in E^0 \colon T(w) \subseteq \ol{H}(J)\}$ and so, by Remark~\ref{papaterra}, $\{w \in E^0 \colon T(w) \subseteq \ol{H}(J)\}$ is hereditary and saturated. Suppose first that $J$ is regular. Then 
item (ii) and Theorem~\ref{peixevoador} imply that \[\begin{array}{ll} H(J) & = H(J^{\perp \perp}) = H(I(\{w \in E^0 \colon T(w) \subseteq \ol{H}(J)\}))\\ &
= \{w \in E^0 \colon T(w) \subseteq \ol{H}(J)\}.
\end{array}\]

 For the converse, use again (ii) and the fact that $J$ is graded (along with Theorem~\ref{peixevoador}) to get that $J = I(H(J))=J^{\perp\perp}$.
\end{proof}

Next, we will explore relations between regular graded ideals and cycles without exits in row-finite graphs.

Let $E$ be a graph and $P_c(E)$ be the set of all the vertices in cycles without exit in $E$. Fix $H$ a saturated and hereditary subset of $E^0$. Notice that if $c$ is a cycle in $E$, then some vertex of $c$ belongs to $H$ if, and only if, each vertex in $c$ belongs to $H$. So, it is possible to define the map
$f:P_c(E)\setminus H\rightarrow P_c(E/H)$ by $f(v)=v$. Clearly, $f$ is injective.

\begin{Proposition}\label{bijecao}
Let $E$ be a row-finite graph and  let $H\subseteq E^0$ be saturated and hereditary. If $I(H)$ is regular, then  $f:P_c(E)\setminus H\rightarrow P_c(E/H)$ defined as above is a bijective map. 
\end{Proposition}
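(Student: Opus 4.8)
The plan is to show that the injective map $f\colon P_c(E)\setminus H \to P_c(E/H)$, $f(v)=v$, is surjective when $I(H)$ is regular. So fix a vertex $v\in P_c(E/H)$; then $v$ lies on a cycle $c = e_1\cdots e_n$ in $E/H$ which has no exit \emph{in} $E/H$. Since $(E/H)^1 = \{e\in E^1 : r(e)\notin H\}$ and $(E/H)^0 = E^0\setminus H$, the edges $e_i$ are genuine edges of $E$, so $c$ is already a cycle in $E$ based at $v\in E^0\setminus H$; what must be ruled out is that $c$ has an exit in $E$ that got deleted in passing to the quotient. Thus I must show: for each $i$, the only edge $e\in E^1$ with $s(e)=s(e_i)$ is $e_i$ itself. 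Suppose not — say $s(f)=s(e_i)$ with $f\neq e_i$. Because $c$ has no exit in $E/H$, this $f$ must have been deleted, i.e. $r(f)\in H$.

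The heart of the argument is to derive a contradiction with the regularity of $I(H)$, using Proposition~\ref{tomaraqdc}(iii): $I(H)$ regular means $H = \{w\in E^0 : T(w)\subseteq \overline{H}(I(H))\}$, and note $H(I(H)) = H$ by Theorem~\ref{peixevoador} since $H$ is saturated and hereditary, so $\overline{H}(I(H)) = \overline{H} := \{s(\alpha) : \alpha\in E^*,\ r(\alpha)\in H\}$. Now consider the vertex $u := s(e_i)$, which lies on $c$ hence is not in $H$. I claim $T(u)\subseteq \overline{H}$, which by regularity would force $u\in H$ — a contradiction. To see the claim: any vertex $z\in T(u)$ is $z = r(\alpha)$ for some path $\alpha$ with $s(\alpha)=u$. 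Reading $\alpha$ along the graph: as long as $\alpha$ stays on the cycle $c$, at the step where it sits at $s(e_i)$ it may either continue along $e_i$ or branch off along $f$ (with $r(f)\in H$), and since the cycle has no exit in $E/H$ at any \emph{other} vertex, the only way to leave $c$ is via such a deleted edge, whose range is in $H$. Either $\alpha$ eventually takes such a branching edge — then from that point on $r(\alpha)$ is reachable from a vertex of $H$, so $z\in H\subseteq\overline{H}$, hence certainly $z\in\overline{H}$ — or $\alpha$ never leaves $c$, in which case $z$ is a vertex of $c$; but then from $z$ one can travel around the cycle to reach $s(e_i)$ and branch off along $f$ into $H$, so $z = s(\beta)$ for a path $\beta$ ending in $H$, i.e. $z\in\overline{H}$. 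In both cases $z\in\overline{H}$, establishing $T(u)\subseteq\overline{H}$ and the desired contradiction.

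Therefore no such exit-edge $f$ exists, $c$ has no exit in $E$, so $v\in P_c(E)$; and $v\notin H$ since it lies on $c\subseteq E/H$. Hence $v = f(v)$ lies in the image of $f$, proving surjectivity; combined with the already-noted injectivity, $f$ is a bijection.

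The step I expect to be the main obstacle is the careful case analysis showing $T(u)\subseteq\overline{H}$ — in particular being precise about \emph{where} a path starting at $u$ can leave the cycle $c$. One must use that $c$ has no exit in $E/H$ at \emph{every} one of its vertices, so that the \emph{only} candidate edges leaving $c$ in $E$ are those deleted by the quotient (range in $H$); and one must handle the degenerate sub-case where the path $\alpha$ never leaves $c$ at all by using the cyclic structure to route $z$ back to $s(e_i)$ and then out along $f$. Row-finiteness is used implicitly via the ambient results (Proposition~\ref{tomaraqdc} is stated for row-finite graphs), so no extra care is needed there. Everything else is bookkeeping with the definitions of $\overline{H}$, $T(\cdot)$, and the quotient graph.
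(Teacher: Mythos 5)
Your proof is correct and follows essentially the same route as the paper's: assume the cycle $c$ acquires an exit in $E$, note that any such exit must have range in $H$, deduce that the tree of a vertex on $c$ lands entirely in $\overline{H}(I(H))$, and invoke Proposition~\ref{tomaraqdc} together with regularity to force that vertex into $H$, a contradiction. The only cosmetic difference is that you run the contradiction at $u=s(e_i)$ via item (iii), while the paper runs it at $v$ via item (ii); the substance is identical.
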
 

\begin{proof}
All we have to do is to show that $f$ is surjective. Let $v\in P_c(E/H)$. Then $v$ is a vertex of a cycle without exit $c=e_1...e_n$ in $E/H$. Notice that $c$ is also a cycle in $E$. We show that $v\in P_c(E)\setminus H$.

 Suppose that $c$ has an exit in $E$, that is, there exists some edge $e\in E$ such that $e\neq e_i$ for some $i$ and $s(e_i)=s(e)$. Then,  $r(e)\in H$ because, otherwise, $c$ has an exit in $E/H$. Since $H$ is hereditary, we have that $r(e\beta)\in H (\subseteq \overline{H}(I(H)))$ for each finite path $\beta$ with $s(\beta)=r(e)$.

Let $u$ be a vertex in $c$. Then, there exist a finite path $\alpha$ such that $s(\alpha)=u$ and $r(\alpha)=r(e)$. Since $r(e)\in H$, we obtain that $u\in \overline{H}(I(H))$. 

We conclude that, for each path $\gamma$ with $s(\gamma)=v$, $r(\gamma)\in \overline{H}(I(H))$. This means that $T(v)\subseteq \overline{H}(I(H))$. Hence, from the second item of Proposition~\ref{tomaraqdc}, we obtain that $v\in I(H)^{\perp\perp}$. Since $I(H)$ is regular, we get that  $v\in I(H)$ and hence $v\in H$, which is impossible, since $v$ is a vertex of a cycle in $E/H$. Therefore, $v\in P_c(E)\setminus H$ and $f$ is surjective.   
\end{proof}

From the previous proposition, we get the following consequences.

\begin{Corollary}\label{cor1}
Let $E$ be row-finite directed graph and $H$ be a saturated and hereditary subset of $E^0$.
\begin{enumerate}
    \item If $E/H$ satisfies condition $(L)$ then $P_c(E)\subseteq H$.
    \item If $I(H)$ is regular then $E/H$ satisfies condition $(L)$ if, and only if, $P_c(E)\subseteq H$.
\end{enumerate}
\end{Corollary}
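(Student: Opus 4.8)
The plan is to reduce both parts of Corollary~\ref{cor1} to the preceding results on $P_c$ and the quotient graph, together with the observation (made just before Proposition~\ref{bijecao}) that a cycle in $E$ lies entirely inside $H$ or entirely outside $H$, and that a cycle without exit in $E/H$ is automatically a cycle without exit in $E$ (this is exactly what makes the map $f$ well-defined).

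For item~(i), I would argue by contraposition: suppose $P_c(E)\not\subseteq H$, so there is a cycle $c$ in $E$ without exit in $E$ whose vertices all lie in $E^0\setminus H$. Since $c$ has no exit in $E$, it certainly has no exit in the subgraph $E/H$; it remains to check that $c$ survives in $E/H$, i.e. that every edge $e_i$ of $c$ and every vertex of $c$ belongs to $E/H$. The vertices are in $E^0\setminus H=(E/H)^0$ by assumption, and each $e_i$ has $r(e_i)$ a vertex of $c$, hence $r(e_i)\notin H$, so $e_i\in(E/H)^1$. Thus $c$ is a cycle without exit in $E/H$, so $E/H$ fails Condition~(L). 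This proves (i) directly, with no need for regularity.

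For item~(ii), the nontrivial direction is "$P_c(E)\subseteq H\ \Rightarrow\ E/H$ satisfies Condition~(L)", since the other direction is (i). Here is where Proposition~\ref{bijecao} enters: assuming $I(H)$ is regular, the map $f\colon P_c(E)\setminus H\to P_c(E/H)$ is a bijection. If $P_c(E)\subseteq H$, then $P_c(E)\setminus H=\emptyset$, so surjectivity of $f$ forces $P_c(E/H)=\emptyset$, which says precisely that $E/H$ has no cycle without exit, i.e. $E/H$ satisfies Condition~(L). Combining this with (i) gives the stated equivalence.

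I do not anticipate a genuine obstacle here, as both parts are short deductions from material already established; the only point requiring a little care is the routine verification in (i) that the cycle $c\subseteq E^0\setminus H$ actually descends to a cycle of $E/H$ (checking edges and vertices against the definition of the quotient graph), and recalling that "no exit in $E$" implies "no exit in $E/H$" since $E/H$ has fewer edges. One should also note explicitly that $E/H$ is row-finite when $E$ is, so that "Condition~(L) fails" is equivalent to "$P_c(E/H)\neq\emptyset$" in the sense used via Proposition~\ref{bijecao}.
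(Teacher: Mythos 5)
Your proof is correct and is essentially the deduction the paper intends: the paper gives no written proof of this corollary, and your item~(i) is exactly the well-definedness of the map $f$ (a cycle without exit in $E$ lying outside $H$ descends to a cycle without exit in $E/H$), while item~(ii) is the surjectivity statement of Proposition~\ref{bijecao} applied to the empty set. The remark about row-finiteness is harmless but unnecessary, since $P_c(G)=\emptyset$ is equivalent to Condition~(L) for $G$ by definition.
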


Next, we show that, as in the C*-algebraic setting (see \cite[Corollary~3.8]{galera}), quotients by regular ideals preserve Condition~(L).

\begin{Proposition}\label{lagosta}
Let $E$ be a row-finite graph satisfying Condition~(L).
Let $J$ be a regular ideal in $L_K(E)$.
Then $E/H(J)$ satisfies Condition~(L) and $L_K(E)/J \simeq L_K(E/H(J))$.
\end{Proposition}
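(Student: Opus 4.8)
The plan is to reduce the statement entirely to results already in hand, chiefly Corollary~\ref{regular are graded}, Corollary~\ref{cor1}, and Proposition~\ref{urso}; no genuinely new computation is needed.

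First I would use that $J$ is regular to invoke Corollary~\ref{regular are graded}, which tells us $J$ is $\Z$-graded. Since $E$ is row-finite, Theorem~\ref{peixevoador} applies, so the assignment $J' \mapsto H(J')$ is a lattice isomorphism from the graded ideals of $L_K(E)$ onto the hereditary saturated subsets of $E^0$, with inverse $H \mapsto I(H)$. In particular $J = I(H(J))$, and by Remark~\ref{papaterra} the set $H := H(J)$ is hereditary and saturated. Hence $I(H) = J$ is a regular ideal, so the hypothesis needed to apply Corollary~\ref{cor1}(ii) with this $H$ is satisfied.

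Next I would exploit that $E$ satisfies Condition~(L): by definition every cycle of $E$ has an exit, so $E$ has no cycle without an exit and therefore $P_c(E) = \emptyset$. Trivially $P_c(E) \subseteq H$. Applying Corollary~\ref{cor1}(ii) (legitimate because $I(H) = J$ is regular) we conclude that $E/H = E/H(J)$ satisfies Condition~(L), which is the first assertion. For the second assertion, since $J$ is graded (established in the first step), Proposition~\ref{urso} gives $L_K(E)/J \simeq L_K(E/H(J))$ directly; note that this last step does not even require Condition~(L) on the quotient graph.

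There is no substantial obstacle in this argument; the only point requiring care is the bookkeeping that makes the machinery applicable, namely that it is regularity of $J$ (not merely $J$ being graded) together with row-finiteness that permits the identification $J = I(H(J))$ via Theorem~\ref{peixevoador} and hence the use of Corollary~\ref{cor1}(ii). If one wished, one could instead bypass Corollary~\ref{cor1} and argue from Proposition~\ref{tomaraqdc}(iii): regularity of $J$ forces $H(J) = \{w \in E^0 : T(w) \subseteq \overline{H}(J)\}$, and a vertex on a cycle without exit in $E/H(J)$ would (as in the proof of Proposition~\ref{bijecao}) have its tree inside $\overline{H}(J)$, landing it in $H(J)$ and yielding a contradiction; but since $E$ itself has no exit-free cycle, the short route through $P_c(E) = \emptyset$ is cleanest.
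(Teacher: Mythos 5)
Your argument is correct and follows the paper's own proof essentially verbatim: establish $P_c(E)=\emptyset$ from Condition~(L), use Corollary~\ref{regular are graded} and Theorem~\ref{peixevoador} to identify $J=I(H(J))$ as a regular graded ideal, apply Corollary~\ref{cor1}(ii) for Condition~(L) on the quotient graph, and invoke Proposition~\ref{urso} for the isomorphism. No issues.
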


\begin{proof}
Since $E$ satisfies condition $(L)$, we have that $P_c(E)=\emptyset$. Hence, $P_c(E)\subseteq H(J)$. Since $J$ is regular, by Corollary~\ref{regular are graded}, $J$ is graded. So, $I(H(J))=J$ (see Theorem~\ref{peixevoador}) is regular and, from the second item of Corollary~\ref{cor1}, we obtain that $E/H(J)$ satisfies condition $(L)$.

The last statement of the proposition follows from  Corollary~\ref{regular are graded} and Proposition~\ref{urso}.
\end{proof}

\begin{Remark}
The hypothesis that $J$ is regular can not be dropped in the previous proposition. For instance, let $E$ be the graph with two edges $f,g$ and two vertices $u,v$, with $s(f)=r(f)=u=s(g)$ and $r(g)=v$. Let $H=\{v\}$, which is saturated and hereditary, and let $J=I(H)$, which is a graded ideal. Notice that $J$ is not regular. Indeed, since $P_c(E)=\emptyset$, if $J$ is regular then, by the second item of Corollary~\ref{cor1}, $E/H$ satisfy Condition~(L), which is not the case. So, $E$ is a graph which satisfies condition $(L)$, and $J$ is a graded ideal (which is not regular) such that $E/H(J)=E/H$ does not satisfy Condition~ (L).
\end{Remark}

We refer the reader to \cite[Example~3.9]{galera} for an example which shows that not all ideals $J$ with $E/J$ satisfying Condition~(L) are regular.

\begin{Remark}\label{ondesol}
The theory of regular ideals has divergences in the analytical and algebraic settings. A good example to consider is the graph $E$ with a single vertex and a single edge. In this case $C^*(E) = C(\mathbb{T})$ (the C*-algebra of the continuous complex functions with domain $\mathbb{T}=\{z\in \mathbb{C}:|z|=1\}$). Let $X=\{a+ib\in \mathbb{T}:a\geq 0 \},$ and $I=\{f\in C(\mathbb{T}): f_{|_X=0}\}$. Then $I$ is a regular ideal of $C(\mathbb{T})$ but is not gauge-invariant (see \cite{illinois} for a description of gauge-invariant ideals in graph C*-algebras).
In the algebraic framework, we have that $L_K(E) = K[x,x^{-1}]$, the Laurent polynomial algebra and, by Corollary~\ref{regular are graded}, each regular ideal is graded.

So, while there are many non-trivial regular, non-gauge-invariant ideals in $C^*(E)$, in the algebraic settings all the regular ideals are graded. Furthermore, while $L_K(E)$ does not contain any non-trivial graded ideal, it contains many non-graded ideals (see \cite[Remark~2.1.6]{book}).
\end{Remark}

We finish the paper noticing that maximal ideals $I$ such that $I^\perp\neq \{0\}$ are a good source of regular ideals, see Proposition~\ref{alface} below. A description of the existence of maximal ideals in Leavitt path algebras is given in \cite{Muge}. Furthermore, in some situations every prime ideal of a Leavitt path algebra is maximal, see \cite[Theorem 6.1]{ranga}.

\begin{Proposition}\label{alface}
Let $A$ be an algebra with local units (which is the case for Leavitt path algebras). If $I$ is a maximal ideal of $A$ then $I$ is regular or $I^\perp = \{0\}$.
\end{Proposition}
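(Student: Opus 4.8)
The plan is to use the maximality of $I$ to pin down $I^{\perp\perp}$. First I would observe that $I^\perp$ is always an ideal (as noted after Definition~\ref{def: reg}) and that $I \subseteq I^{\perp\perp}$, with $I^{\perp\perp}$ also an ideal of $A$. Since $I$ is maximal, there are only two possibilities: either $I^{\perp\perp} = I$, in which case $I$ is regular by Definition~\ref{def: reg} and we are done, or $I^{\perp\perp} = A$. So the entire argument reduces to showing that $I^{\perp\perp} = A$ forces $I^\perp = \{0\}$.

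Assume $I^{\perp\perp} = A$. Then by definition of the annihilator, every element of $A$ annihilates $I^\perp$ on both sides; in particular, for every $a \in I^\perp$ and every $x \in A$ we have $x a = a x = 0$. This is where the hypothesis of local units enters: given $a \in I^\perp$, there is an idempotent $u \in A$ with $u a = a u = a$. But $u \in A = I^{\perp\perp}$, so $u$ annihilates $I^\perp$, hence $u a = 0$, and therefore $a = u a = 0$. Thus $I^\perp = \{0\}$, which is the remaining alternative in the statement.

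Putting the two cases together gives the dichotomy: either $I = I^{\perp\perp}$ (so $I$ is regular) or $I^{\perp\perp} = A$ (so $I^\perp = \{0\}$). I do not anticipate a serious obstacle here; the only point requiring a little care is the very first step, namely verifying that $I \subsetneq I^{\perp\perp} \subseteq A$ really does leave no room between $I$ and $A$ other than these two endpoints, which is immediate from maximality once one knows $I^{\perp\perp}$ is a genuine (two-sided) ideal containing $I$. The role of local units is essential and cannot be removed: without an approximate-identity-type condition, $I^\perp = \{0\}$ would not follow from $I^{\perp\perp} = A$, since an element could be annihilated by all of $A$ yet be nonzero. One should also note in passing that Leavitt path algebras do have local units — the set of finite sums of distinct vertices provides them — so the proposition indeed applies in that setting.
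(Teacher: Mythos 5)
Your proof is correct and follows essentially the same route as the paper: maximality forces $I^{\perp\perp}=I$ or $I^{\perp\perp}=A$, and in the latter case local units kill $I^\perp$ (the paper phrases this as $I^\perp=I^{\perp\perp\perp}=A^\perp=\{0\}$, which is the same computation you carry out directly). No gaps.
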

\begin{proof}
Suppose that $I$ is a non-zero maximal ideal. Since $I\subseteq I^{\perp \perp}$, we have that either $I=I^{\perp \perp}$ or $I^{\perp \perp}=A$ in which case $I^\perp =I^{\perp\perp\perp}= A^\perp=\{0\}$ (since $A$ has local units). 
\end{proof}

\begin{Remark}\label{local units} If $E$ is an arbitrary graph and $I$ is a maximal ideal of $L_K(E)$ such that $I^{\perp}\neq \{0\}$ then, $I$ is graded.
\end{Remark}

\section{Acknowledgments}

The authors are indebt with the anonymous referee, who pointed out many improvements in the paper and, in particular, has pointed the path to the proof of Theorem~\ref{avai ganhou do vasco}. We would also like to thank Prof. Pere Ara for his insightful suggestions regarding the manuscript.


\begin{thebibliography}{99}

\bibitem{Gene} G. Abrams, 
\textit{Leavitt path algebras: the first decade}. Bulletin of Mathematical Sciences. {\bf 5} (2015), no. 1, 59--120.

\bibitem{book} G. Abrams, P. Ara, M. Siles Molina, Leavitt path algebras. \emph{Lecture Notes in Mathematics. Springer} (2017). 21--91.


\bibitem{ABHS} P. Ara, J. Bosa, R. Hazrat and A. Sims, \textit{Reconstruction of graded groupoids from graded Steinberg algebras}, Forum Math., 29(5) (2017), 1023-1037.


\bibitem{AHHS} P. Ara, R. Hazrat, H. Li, A. Sims,
\textit{Graded Steinberg algebras and their representations}. Algebra Number Theory 12 (1) (2018), 131--172.

\bibitem{Pardo} P. Ara, M. Moreno, E. Pardo, \textit{Nonstable K-theory for Graph Algebras}, Algebr. Represent. Theory 10 (2007), 157--178.

 \bibitem{AMMS} G. Aranda Pino, D. Mart\'{\i}n Barquero, C. Mart\'{\i}n Gonz\'alez, M. Siles Molina, \textit{Socle theory for Leavitt path algebras of arbitrary graphs}. Rev. Mat. Iberoam.  26  (2010),  no. 2, 611--638. 
 
 \bibitem{illinois} T. Bates, J. H. Hong, I. Raeburn, W. Szymanski, \textit{The ideal structure of the {C}*-algebras of infinite graphs}, Illinois J. Math. 46
Volume 46 (2002), no. 4, 1159--1176.
 
 
 \bibitem{galera} J. H. Brown, A H. Fuller, D. R. Pitts, S. A. Reznikoff, \textit{Regular ideals of graph algebras}, (2020)	arXiv:2006.00395 [math.OA].
 
 \bibitem{CCL} C. G. Canto, D. Mart\'{\i}n Barquero, C. Mart\'{\i}n Gonz\'alez, \textit{Invariant ideals in Leavitt path algebras}, (2020) arXiv:2006.12876 [math.RA].
 
 \bibitem{Canto} C. G. Canto, D. Gon\c{c}alves, \textit{Representations of relative Cohn path algebras},  J. Pure Appl. Algebra 224 (2020), 106310.
 
 
\bibitem{CMMS} {L. O. Clark, D. Mart\'{\i}n Barquero, C. Mart\'{\i}n Gonz\'{a}lez, M. Siles Molina.} \textit{Using the Steinberg algebra model to determine the center of any Leavitt path algebra.} Isr. J. Math. 230 (2019), 23--44.

\bibitem{CHR} L. O.  Clark, R. Hazrat and S. W. Rigby, \textit{Strongly graded groupoids and strongly graded Steinberg algebras}, J. Algebra 530 (2019), 34--68.

\bibitem{dd2} D. Gon\c{c}alves, D. Royer, \textit{Leavitt path algebras as partial skew group rings}, Comm. Algebra 42 (2014), 127--143.

\bibitem{dy} D. Gon\c{c}alves, G. Yoneda,
\textit{Free path groupoid grading on Leavitt path algebras}, Internat. J. Algebra Comput. 26 (2016), 1217--1235.

\bibitem{Muge} S. Esin and M. Kanuni, \textit{Existence of maximal ideals in {L}eavitt path algebras}, Turk. J. Math. 42 (2018), 2081--2090

\bibitem{Hamana} M. Hamana, \emph{The centre of the regular monotone completion of a
  {$C^{\ast} $}-algebra}, J. London Math. Soc. (2) \textbf{26} (3) (1982),  522--530. 
  

\bibitem{HR} R. Hazrat, K. M. Rangaswamy, \textit{On graded irreducible representations of Leavitt path algebras}, J. Algebra 450 (2016), 458--486.

\bibitem{HRS} R. Hazrat, K. M. Rangaswamy, A. K. Srivastava, \textit{Structure theory of graded regular graded self-injective rings and applications} (2018), arXiv:1808.03905 [math.RA]

 \bibitem{ranga} K.M. Rangaswamy, \textit{The theory of prime ideals of Leavitt path algebras over
arbitrary graphs}, J. Algebra 375 (2013), 73--96.

\bibitem{ranga1} K.M. Rangaswamy, \textit{Generators of Two-Sided Ideals of Leavitt
Path Algebras over Arbitrary Graphs}, Comm. Algebra 42 (2014), 2859-–2868.

\bibitem{Ri} S. W. Rigby, \textit{The Groupoid Approach to Leavitt Path Algebras.} In: Ambily A., Hazrat R., Sury B. (eds) Leavitt Path Algebras and Classical K-Theory. Indian Statistical Institute Series.  Springer, Singapore (2020).



\bibitem{Tomforde} M. Tomforde, \textit{
Uniqueness theorems and ideal structure for Leavitt path algebras}, J. Algebra 318(1) (2007), 270--299.








 
 










\end{thebibliography}
\end{document}